\title{Best Proximity Points for Geraghty-Type Non-Self Mappings with a Registration-Inspired Alignment Model \\}
\author{
  Fatemeh Fogh\thanks{e-mail: ffogh2021@fau.edu}\\
  Department of Mathematical Sciences, Florida Atlantic University, USA
  \and
  Sara Behnamian\thanks{e-mail: sara.behnamian@sund.ku.dk}\\
  Globe Institute, University of Copenhagen, Copenhagen, 1350, Denmark
}
\newtheorem{defn}{Definition}
\newtheorem{cor}{Corollary}
\newtheorem{exm}{Example}
\newtheorem{thm}{Theorem}
\newtheorem{rem}{Remark}
\begin{document}
\maketitle
\begin{abstract}
We study Geraghty-type non-self mappings within the framework of best proximity point theory. By introducing auxiliary functions with subsequential convergence, we establish general conditions ensuring the existence and uniqueness of best proximity points. Our results extend and unify earlier work on proximal and Kannan-type contractions under a Geraghty setting, and we provide counterexamples showing that the auxiliary assumptions are essential. To demonstrate applicability, we construct a registration-inspired alignment model in which all hypotheses can be explicitly verified. This example illustrates how the theoretical framework guarantees a unique and well-defined alignment anchor, thereby highlighting the relevance of best proximity theory in registration problems.
\end{abstract}
{\bf Keywords:} best proximity point; Geraghty-type contraction; proximal mapping; Kannan--Geraghty mapping; auxiliary function; subsequential convergence; image registration

\def\thefootnote{ \ }
\footnotetext{{\em} $2010$ Mathematics Subject Classification: 47H10; 47H09}

\section{Introduction and Preliminaries }
Fixed point theory plays a fundamental role in nonlinear analysis, with applications ranging from differential equations to optimization problems. Classical results such as the Banach contraction principle provide powerful tools for self-mappings, but in many practical situations one encounters non-self mappings, where the appropriate substitute is the notion of best proximity points.

The development of the field has proceeded through several key milestones. In 1968, Kannan introduced a celebrated generalization of Banach's contraction principle by proving that for a complete metric space $(X,d)$, any mapping $T:X\to X$ satisfying
\[
d(Tx,Ty)\leq \alpha \big[d(x,Tx)+d(y,Ty)\big], \qquad \forall x,y\in X,
\]
with $\alpha\in[0,\tfrac12)$, admits a unique fixed point \cite{000012}. A remarkable feature of Kannan's theorem is that no continuity assumption on $T$ is required. Later, Ariza-Ruiz and Jiménez-Melado (2010) extended this result by introducing \emph{weakly Kannan maps}, in which the contractive coefficient depends on the points $x,y$. More precisely, there exists $\alpha:X\times X\to [0,1)$ such that
\[
d(Tx,Ty)\leq \alpha(x,y)\,[d(x,Tx)+d(y,Ty)], \qquad \forall x,y\in X,
\]
with the uniform bound $\sup\{\alpha(x,y):a\leq d(x,y)\leq b\}<1$ for all $0<a\leq b$ \cite{000012}. This flexibility allows one to capture the behavior of a wider class of nonlinear operators.

The move from fixed point results to best proximity point results was initiated by Sadiq Basha (2011), who introduced the concept of \emph{proximal contractions} and established best proximity point theorems for non-self mappings \cite{vvvv}. The key idea is to characterize conditions under which, even in the absence of fixed points, one can still guarantee the existence of a point $x^*$ such that the distance between $x^*$ and its image $Tx^*$ is minimal. 

Subsequent work by Beiranvand et al.\ provided a different perspective by introducing the use of an auxiliary function. Specifically, for mappings $T,S:X\to X$ in a metric space $(X,d)$, they showed that if $S$ is one-to-one, continuous, and sequentially convergent, and if
\begin{equation}\label{lo1}
d(STx,STy)\leq k\,d(Sx,Sy), \qquad \forall x,y\in X,
\end{equation}
for some constant $k\in(0,1)$, then $T$ possesses a best proximity point \cite{a1}. Moradi later adapted this framework to Kannan-type settings \cite{a3} and \cite{Fogh2020, Behnamian2020, Fogh2020}. These contributions highlighted the power of auxiliary functions in addressing the non-self case.
This paper on fixed point and best proximity point theory has broad applications in areas such as digital modeling of oscillators, fuzzy insulin dosing for diabetes management, multicriteria decision-making for IoT service placement, intelligent electric vehicle charging in microgrids, and modeling nonlinear epidemiological relationships. These applications highlight the role of fixed point formulations in ensuring stability, convergence, and reliable optimization across engineering, biomedical, and environmental domains \cite{Taghizadeh2020, Ganji2025, Rastgoo2022, Ahmadi2020}.

Building on these foundations, the present work introduces two novel classes of mappings: \emph{proximal Geraghty} and \emph{proximal Kannan--Geraghty} mappings of the first kind. We establish existence and uniqueness theorems for these mappings and illustrate, by examples, that the auxiliary conditions we impose are both natural and necessary. Before turning to our main results, we recall some fundamental concepts from best proximity theory.

\begin{defn}\cite{a6}
Let $A,B$ be nonempty subsets of a metric space $(X,d)$. The \emph{best proximity sets} $A_0$ and $B_0$ are defined as
\begin{align*}
A_0 &:= \{x\in A : d(x,y)=d(A,B)\ \text{for some } y\in B\},\\
B_0 &:= \{y\in B : d(x,y)=d(A,B)\ \text{for some } x\in A\},
\end{align*}
where
\[
d(A,B):=\inf\{d(x,y):x\in A,y\in B\}
\]
denotes the minimal distance between $A$ and $B$.
\end{defn}

When $A$ and $B$ are closed subsets of a normed linear space with $d(A,B)>0$, it is known that both $A_0$ and $B_0$ lie on the boundaries of their respective sets \cite{vvvv}.

\begin{defn}\cite{a6}
Given a mapping $T:A\to B$, a point $x^*\in A$ is called a \emph{best proximity point} of $T$ if
\[
d(x^*,Tx^*)=d(A,B).
\]
\end{defn}

\begin{defn}\cite{vvvv}
A mapping $S:A\to B$ is called a \emph{proximal contraction of the first kind} if there exists $\alpha\in[0,1)$ such that for all $x_1,x_2,u_1,u_2\in A$,
\[
\big[d(u_1,Sx_1)=d(A,B),\ d(u_2,Sx_2)=d(A,B)\big]\ \ \Longrightarrow\ \ d(u_1,u_2)\leq \alpha\,d(x_1,x_2).
\]
For self-maps, this reduces exactly to the Banach contraction principle; for non-self maps, proximal contractions need not be contractions in the usual sense.
\end{defn}

\begin{cor}\cite{a3}
Let $A,B$ be nonempty closed subsets of a complete metric space such that $A_0$ and $B_0$ are nonempty. Suppose $T:A\to B$ and $g:A\to A$ satisfy:
\begin{enumerate}
\item $g$ is one-to-one and continuous, and $g^{-1}:g(A)\to A$ is uniformly continuous;
\item $T$ is a proximal contraction of the first kind with $T(A_0)\subseteq B_0$.
\end{enumerate}
Then there exists a unique $x^*\in A$ such that $d(gx^*,Tx^*)=d(A,B)$. Moreover, for any $x_0\in A_0$, the sequence defined by $d(gx_{n+1},Tx_n)=d(A,B)$ converges to $x^*$.
\end{cor}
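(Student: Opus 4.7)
The plan is to adapt the standard best-proximity iteration scheme used in Sadiq Basha's proximal contraction theorem, modified to accommodate the auxiliary map $g$. I would organize the proof around constructing an iterative sequence, establishing a Cauchy estimate through the contractive condition, and then pulling the limit back via $g^{-1}$.

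First, I would construct the sequence $\{x_n\}$ inductively. Starting from an arbitrary $x_0\in A_0$, the hypothesis $T(A_0)\subseteq B_0$ guarantees that $Tx_0\in B_0$, so there exists a point $v_1\in A_0$ with $d(v_1,Tx_0)=d(A,B)$. Interpreting $v_1$ as $gx_1$ and using the injectivity of $g$ to define $x_1=g^{-1}(v_1)$, one iterates to obtain a sequence $\{x_n\}\subseteq A_0$ satisfying $d(gx_{n+1},Tx_n)=d(A,B)$ for every $n\geq 0$.

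Next, applying the proximal contraction condition to the pair of relations $d(gx_n,Tx_{n-1})=d(A,B)$ and $d(gx_{n+1},Tx_n)=d(A,B)$ yields the central inequality
\[
   d(gx_{n+1},gx_n)\leq \alpha\, d(x_n,x_{n-1}).
\]
The delicate point here is that this estimate compares distances on the $g$-image on the left with distances on the original sequence on the right, so the standard Banach-style geometric-decay argument does not apply directly. To push it through, I would invoke the uniform continuity of $g^{-1}$ on $g(A)$: uniform continuity is precisely the property that preserves Cauchy sequences, and this lets one control $d(x_n,x_{n-1})$ uniformly in terms of $d(gx_n,gx_{n-1})$. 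Combining this with the contractive inequality above gives that $\{gx_n\}$ is Cauchy in $g(A)$, and transferring this back through $g^{-1}$ yields that $\{x_n\}$ itself is Cauchy in $A$. By completeness of $X$ and closedness of $A$, $x_n\to x^*$ for some $x^*\in A$. Continuity of $g$ gives $gx_n\to gx^*$, and passing to the limit in $d(gx_{n+1},Tx_n)=d(A,B)$ produces $d(gx^*,Tx^*)=d(A,B)$.

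For uniqueness, if $y^*$ were another point with $d(gy^*,Ty^*)=d(A,B)$, applying the proximal contraction to this relation together with $d(gx^*,Tx^*)=d(A,B)$ yields $d(gx^*,gy^*)\leq\alpha\,d(x^*,y^*)$. Coupled with the injectivity of $g$ and the fact that the Picard-type iteration converges to a unique limit regardless of the chosen starting point in $A_0$, this forces $x^*=y^*$.

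The main obstacle is the Cauchy step: the contractive inequality straddles two metric scales, and the entire argument hinges on the uniform continuity of $g^{-1}$ being strong enough to bridge them. The continuity of $g$ alone would not suffice, nor would mere injectivity; it is precisely the uniform-continuity hypothesis that makes Cauchyness transferable in both directions between $\{x_n\}$ and $\{gx_n\}$, and the heart of the proof is its careful deployment.
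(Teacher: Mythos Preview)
The paper does not prove this corollary; it is quoted from \cite{a3} as background, so there is no in-paper argument to compare against. I assess your proposal on its own terms and against the technique the paper \emph{does} use in its proved results (Theorems~\ref{rrr} and~\ref{b66}).

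There is a genuine gap at the limit step. You write that ``passing to the limit in $d(gx_{n+1},Tx_n)=d(A,B)$ produces $d(gx^*,Tx^*)=d(A,B)$,'' but this requires $Tx_n\to Tx^*$, i.e.\ continuity of $T$, and a proximal contraction of the first kind need not be continuous. The device the paper uses in Theorems~\ref{rrr} and~\ref{b66} avoids this entirely: once the limit $x^*\in A_0$ is secured, use $T(A_0)\subseteq B_0$ to produce $w\in A_0$ with $d(w,Tx^*)=d(A,B)$, and apply the proximal-contraction hypothesis to the pair $d(w,Tx^*)=d(A,B)$, $d(gx_{n+1},Tx_n)=d(A,B)$ to obtain $d(gx_{n+1},w)\le\alpha\,d(x_n,x^*)\to 0$, whence $w=gx^*$. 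There is also a structural issue in your construction: you set $x_1=g^{-1}(v_1)$ with $v_1\in A_0$, but nothing in the stated hypotheses ensures $v_1\in g(A)$, nor that the resulting $x_1$ lies in $A_0$ (which you need so that $Tx_1\in B_0$ at the next step). Results of this type normally assume something like $A_0\subseteq g(A_0)$; either the corollary as quoted is missing it or you must justify it, but you cannot silently use it. Finally, your uniqueness sketch is incomplete: from $d(gx^*,gy^*)\le\alpha\,d(x^*,y^*)$ alone you cannot conclude $x^*=y^*$, because uniform continuity of $g^{-1}$ gives no Lipschitz-type lower bound on $d(gx^*,gy^*)$ in terms of $d(x^*,y^*)$.
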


When $g=\mathrm{Id}_A$, we simply say that $T:A\to B$ is a proximal contraction if condition (b) holds.

\begin{defn}\cite{a3}
Let $(A,B)$ be a nonempty pair of subsets of a metric space $(X,d)$. A mapping $T:A\to B$ is called a \emph{proximal Kannan non-self mapping} if there exists $\alpha\in[0,\tfrac12)$ such that for all $u,v,x,y\in A$ with
\[
d(u,Tx)=d(A,B), \qquad d(v,Ty)=d(A,B),
\]
we have
\begin{equation}\label{0p0}
d(u,v)\leq \alpha\,\big[d^*(x,Tx)+d^*(y,Ty)\big],
\end{equation}
where
\[
d^*(x,Tx):=d(x,Tx)-d(A,B)\geq 0.
\]
\end{defn}

The class of proximal Kannan non-self mappings properly contains the class of Kannan non-self mappings.

\begin{defn}[Weak Proximal Kannan Non-Self Mapping]\cite{a3}
Let $(X,d)$ be a metric space, and let $A,B\subseteq X$. A mapping $T:A\to B$ is called a \emph{weak proximal Kannan non-self mapping} if there exists $\alpha\in(0,\tfrac12)$ such that for all $u,v,x,y\in A$ with
\[
d(u,Tx)=d(A,B), \qquad d(v,Ty)=d(A,B),
\]
the implication
\[
\frac{1}{r}\,d^*(x,Tx)\leq d(x,y)\ \ \Longrightarrow\ \ d(u,v)\leq \alpha\,[d^*(x,Tx)+d^*(y,Ty)]
\]
holds, where $r=\tfrac{\alpha}{1-\alpha}$.
\end{defn}

\begin{thm}\cite{a3}
Let $(A,B)$ be a nonempty pair of subsets of a complete metric space $(X,d)$ such that $A_0$ is nonempty and closed. If $T:A\to B$ is a weak proximal Kannan non-self mapping with $T(A_0)\subseteq B_0$, then there exists a unique $x^*\in A$ such that $d(x^*,Tx^*)=d(A,B)$. Moreover, if $\{x_n\}\subseteq A$ satisfies $d(x_{n+1},Tx_n)=d(A,B)$, then $x_n\to x^*$.
\end{thm}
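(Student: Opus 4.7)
The plan is to mimic the Picard iteration used for proximal Kannan maps, taking care to verify the conditional hypothesis of the weak proximal Kannan inequality at each step. Fix $x_0\in A_0$ and use $T(A_0)\subseteq B_0$ recursively to construct a sequence $\{x_n\}\subseteq A_0$ with $d(x_{n+1},Tx_n)=d(A,B)$. Writing $c_n:=d(x_n,x_{n+1})$ and $D_n:=d^*(x_n,Tx_n)$, the triangle inequality $d(x_n,Tx_n)\leq d(x_n,x_{n+1})+d(x_{n+1},Tx_n)$ immediately gives $D_n\leq c_n$, which will be the workhorse estimate.

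To show $c_n\to 0$ geometrically, I would split into cases according to whether the hypothesis $\tfrac{1}{r}D_{n-1}\leq c_{n-1}$ of the weak Kannan inequality holds. If it does, applying the defining implication with $(u,v,x,y)=(x_n,x_{n+1},x_{n-1},x_n)$ yields $c_n\leq \alpha[D_{n-1}+D_n]\leq \alpha[c_{n-1}+c_n]$, and rearranging gives $c_n\leq r\,c_{n-1}$. If it fails, then $c_{n-1}<\tfrac{1}{r}D_{n-1}$ combined with $D_{n-1}\leq c_{n-1}$ pins the two quantities into a narrow ratio, and one must extract decay by a secondary argument (examining $D_n$ versus $c_n$ or the neighboring iterates). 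Iterating the geometric bound shows $\{c_n\}$ is summable, so $\{x_n\}$ is Cauchy; closedness of $A_0$ inside the complete space $X$ then yields a limit $x^*\in A_0$.

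To promote $x^*$ to a best proximity point, use $T(A_0)\subseteq B_0$ to pick $u^*\in A_0$ with $d(u^*,Tx^*)=d(A,B)$, and apply the weak Kannan inequality to $(u^*,x_{n+1},x^*,x_n)$; after verifying the hypothesis holds eventually (which follows once one argues $D(x^*)=0$, or bypassed by a direct limiting estimate using $D_n\to 0$ and $c_n\to 0$), one obtains $d(u^*,x^*)\leq \alpha\,D(x^*)\leq \alpha\,d(u^*,x^*)$ and hence $u^*=x^*$. Uniqueness is then almost immediate: if $y^*$ is another best proximity point, then $D(x^*)=D(y^*)=0$, the hypothesis $0\leq d(x^*,y^*)$ is trivial, and the inequality collapses to $d(x^*,y^*)\leq 0$.

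The main obstacle is the case analysis in the Cauchy argument: a failure of the hypothesis $\tfrac{1}{r}D_{n-1}\leq c_{n-1}$ does not directly produce a contraction, so one needs a supplementary estimate exploiting the tight regime $r\,c_{n-1}<D_{n-1}\leq c_{n-1}$ forced in that case. A secondary concern is verifying the limiting hypothesis in step three without a continuity assumption on $T$; this is where the interplay between closedness of $A_0$ and the inclusion $T(A_0)\subseteq B_0$ has to be used delicately, most likely by showing that the index set where the hypothesis holds is cofinite.
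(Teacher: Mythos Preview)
The paper does not contain a proof of this theorem: it is stated in the preliminaries with the citation \cite{a3} (Gabeleh, 2015) and is quoted purely as background, so there is no ``paper's own proof'' to compare your attempt against.

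As for your sketch itself, the gaps you flag are genuine and not merely cosmetic. With the definition as written, the antecedent of the weak Kannan implication for consecutive iterates reads $\tfrac{1}{r}D_{n-1}\le c_{n-1}$, i.e.\ $D_{n-1}\le r\,c_{n-1}$ with $r=\alpha/(1-\alpha)<1$; the easy estimate $D_{n-1}\le c_{n-1}$ does \emph{not} imply this, so your ``failure case'' is a real case, and in that regime you currently have no mechanism producing decay of $c_n$. Swapping the roles of $x$ and $y$ to test $\tfrac{1}{r}D_n\le c_{n-1}$ is equally circular, since $D_n\le c_n$ only helps once $c_n\le r\,c_{n-1}$ is already known. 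The limiting step has the same defect: to invoke the implication at the pair $(x^*,x_n)$ you must first know $\tfrac{1}{r}d^*(x^*,Tx^*)\le d(x^*,x_n)$, and with $d(x^*,x_n)\to 0$ this is exactly the conclusion $d^*(x^*,Tx^*)=0$ you are trying to prove. Your uniqueness argument, by contrast, is fine: once $D(x^*)=D(y^*)=0$ the antecedent is trivially satisfied.

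In short, the outline is the right shape, but the two places you yourself mark as obstacles are precisely where the argument is incomplete; resolving them requires an additional idea (in Gabeleh's original, this is where the specific constant $r=\alpha/(1-\alpha)$ is exploited), not just a routine estimate.
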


\begin{defn}\cite{ggeraghty}
Let $(X,d)$ be a metric space. A mapping $T:X\to X$ is called a \emph{Geraghty contraction} if there exists $\beta\in\Gamma$ such that
\[
d(Tx,Ty)\leq \beta(d(x,y))\,d(x,y), \qquad \forall x,y\in X,
\]
where $\Gamma$ denotes the class of functions $\beta:[0,\infty)\to[0,1)$ with the property that
\[
\beta(t_n)\to 1 \quad \Rightarrow \quad t_n\to 0.
\]
\end{defn}

\begin{thm}\cite{ggeraghty}\label{sadaf3}
Let $(X,d)$ be a complete metric space and $T:X\to X$ a Geraghty contraction. Then $T$ has a unique fixed point.
\end{thm}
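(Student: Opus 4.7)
The plan is to follow the classical Picard iteration strategy adapted to the Geraghty setting. I would start by fixing an arbitrary $x_0 \in X$ and defining the orbit $x_{n+1} = Tx_n$, together with $d_n := d(x_n, x_{n+1})$. The first step is to verify that $\{d_n\}$ is monotonically decreasing: this follows at once from the contractive inequality, since $d_{n+1} = d(Tx_n, Tx_{n+1}) \leq \beta(d_n)\, d_n < d_n$ whenever $d_n > 0$. Being nonnegative and decreasing, $\{d_n\}$ converges to some $L \geq 0$. To rule out $L > 0$, I would observe that $d_{n+1}/d_n \leq \beta(d_n) < 1$; passing to the limit forces $\beta(d_n) \to 1$, whereupon the defining property of the class $\Gamma$ yields $d_n \to 0$, contradicting $L > 0$.

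The main obstacle, and the technical heart of the proof, is showing that $\{x_n\}$ is Cauchy; unlike the Banach setting, we cannot exploit a uniform ratio $k<1$. I would argue by contradiction: if $\{x_n\}$ is not Cauchy, there exist $\varepsilon > 0$ and subsequences $\{m_k\}, \{n_k\}$ with $m_k > n_k \geq k$, $d(x_{m_k}, x_{n_k}) \geq \varepsilon$, and (choosing $m_k$ minimal) $d(x_{m_k - 1}, x_{n_k}) < \varepsilon$. A triangle inequality argument combined with $d_n \to 0$ then shows $d(x_{m_k}, x_{n_k}) \to \varepsilon$. Bracketing $d(x_{m_k+1}, x_{n_k+1})$ between the iterated distance and the $\beta$-contraction gives
\[
\varepsilon \;\leq\; \liminf_{k\to\infty} d(x_{m_k+1}, x_{n_k+1}) \;\leq\; \limsup_{k\to\infty} \beta\bigl(d(x_{m_k}, x_{n_k})\bigr) \cdot \varepsilon,
\]
which forces $\beta(d(x_{m_k}, x_{n_k})) \to 1$. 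By the Geraghty property, $d(x_{m_k}, x_{n_k}) \to 0$, contradicting $d(x_{m_k}, x_{n_k}) \geq \varepsilon$.

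With $\{x_n\}$ Cauchy, completeness supplies a limit $x^* \in X$. To see that $x^*$ is fixed, I would note that the Geraghty condition makes $T$ Lipschitz with constant $1$ in the sense $d(Tx, Ty) \leq d(x,y)$, hence continuous; therefore $Tx_n \to Tx^*$, while $Tx_n = x_{n+1} \to x^*$, yielding $Tx^* = x^*$. For uniqueness, if $x^* \neq y^*$ were two fixed points, then
\[
d(x^*, y^*) = d(Tx^*, Ty^*) \leq \beta(d(x^*, y^*))\, d(x^*, y^*) < d(x^*, y^*),
\]
an immediate contradiction. The only step that is not routine is the Cauchy argument, where the careful extraction of $\{m_k\}, \{n_k\}$ and the two-sided estimate on $d(x_{m_k+1}, x_{n_k+1})$ must be set up so that the Geraghty hypothesis on $\beta$ can be invoked cleanly.
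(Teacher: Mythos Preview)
Your argument is correct and is essentially the classical proof of Geraghty's theorem: Picard iteration, monotonicity of $d_n$, the $\Gamma$-property to force $d_n\to 0$, the standard ``$\varepsilon$-subsequence'' contradiction for the Cauchy property, and continuity plus uniqueness at the end. One small point of precision: from
\[
\varepsilon \le \limsup_{k\to\infty}\beta\bigl(d(x_{m_k},x_{n_k})\bigr)\cdot\varepsilon
\]
you only obtain $\limsup_k \beta(d(x_{m_k},x_{n_k}))=1$, hence $\beta\to 1$ along a \emph{subsequence}; the Geraghty hypothesis then gives $d(x_{m_k},x_{n_k})\to 0$ along that subsequence, which still contradicts the lower bound $\varepsilon$. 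This is a cosmetic fix, not a gap.

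As for comparison with the paper: there is nothing to compare. Theorem~\ref{sadaf3} is stated in the preliminaries as a known result with a citation to \cite{ggeraghty}, and the paper gives no proof of it. The authors invoke ``standard arguments as in Geraghty's fixed point theorem'' inside the proofs of Theorems~\ref{rrr} and~\ref{b66} precisely to avoid reproducing the Cauchy argument you have written out. So your proposal supplies exactly the omitted details that the paper takes for granted.
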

According to \cite{Kannangraghty}, let $(X,d)$ be a metric space. A mapping $f:X\to X$ is said to be a \emph{Kannan--Geraghty self-mapping} if there exists a function $\beta\in\Gamma$ such that, for all $x,y\in X$,
\[
d(f(x),f(y)) \leq \beta(d(x,y)) \cdot \tfrac12\,[d(x,f(x))+d(y,f(y))].
\]
This definition combines the features of Kannan mappings with the flexibility of Geraghty-type control functions. Motivated by this, we now develop the non-self, proximal analogues.

\section{Main Results}

In this section, we introduce $S$-proximal contraction non-self mappings and $S$-proximal Kannan non-self mappings, and establish sufficient conditions for the existence and uniqueness of best proximity points in complete metric spaces. Before stating the main theorems, we recall the notion of an auxiliary function.

\begin{defn}
Let $(X,d)$ be a metric space, and let $A,B\subseteq X$. A mapping $S:A\cup B\to A\cup B$ is called an \emph{auxiliary function} if $S(A)\subseteq A$ and $S(B)\subseteq B$. 
\end{defn}

\begin{thm}[Extended Proximal Geraghty of the First Kind]\label{rrr}
Let $(A,B)$ be a pair of nonempty subsets of a complete metric space $(X,d)$ such that $A_0$ and $B_0$ are nonempty and closed. Let $S$ be an auxiliary function that is continuous on $A$ and $B$, one-to-one, subsequentially convergent, and satisfies $S(A_0)\subseteq A_0$ and $S(B_0)\subseteq B_0$. 

Suppose $T:A\to B$ is a mapping such that $T(A_0)\subseteq B_0$, and assume that for all $u,v,x,y\in A$,
\[
d(Su,STx)=d(Sv,STy)=d(A,B)
\quad\Longrightarrow\quad
d(Su,Sv)\leq \beta(d(Sx,Sy))\,d(Sx,Sy),
\]
where $\beta\in\Gamma$ (i.e.\ $T$ is an $S$-proximal contraction). 

Then there exists a unique $x^*\in A$ such that
\[
d(Sx^*,STx^*)=d(A,B).
\]
Moreover, if $\{x_n\}\subseteq A$ is a sequence satisfying
\[
d(Sx_{n+1},STx_n)=d(A,B), \quad \forall n\in\mathbb{N},
\]
then $x_n\to x^*$.
\end{thm}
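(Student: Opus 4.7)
My strategy is a Picard-style iteration adapted to the non-self, auxiliary-function setting, in which the auxiliary map $S$ transports the Geraghty contraction from the $(x,y)$-coordinates to the $(Sx,Sy)$-coordinates. I would first construct a sequence $\{x_n\}\subseteq A_0$ satisfying $d(Sx_{n+1},STx_n)=d(A,B)$ for every $n$. Starting from any $x_0\in A_0$, the inclusions $T(A_0)\subseteq B_0$ and $S(B_0)\subseteq B_0$ place $STx_0$ in $B_0$, so the defining property of $B_0$ supplies a point of $A_0$ at distance $d(A,B)$ from $STx_0$; the inclusion $S(A_0)\subseteq A_0$ lets us identify this point as $Sx_1$ for some $x_1\in A_0$, and the iteration continues.

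Applying the $S$-proximal Geraghty hypothesis to the quadruple $(u,v,x,y)=(x_{n+1},x_{n+2},x_n,x_{n+1})$ yields
\[
d(Sx_{n+1},Sx_{n+2})\leq \beta\bigl(d(Sx_n,Sx_{n+1})\bigr)\,d(Sx_n,Sx_{n+1}),
\]
so $t_n:=d(Sx_n,Sx_{n+1})$ is strictly decreasing (unless some $t_n$ vanishes, in which case the argument terminates early) and converges to some $t^*\geq 0$. If $t^*>0$, dividing by $t_n$ forces $\beta(t_n)\to 1$, which by the $\Gamma$ condition gives $t_n\to 0$, a contradiction; hence $t^*=0$.

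The main obstacle is the Cauchy property of $\{Sx_n\}$. I would proceed by contradiction in the now-classical Geraghty way: assuming $\{Sx_n\}$ is not Cauchy, extract $\epsilon>0$ and indices $m_k>n_k\geq k$ with $d(Sx_{m_k},Sx_{n_k})\geq\epsilon$ and $d(Sx_{m_k-1},Sx_{n_k})<\epsilon$, and combine the triangle inequality with $t_n\to 0$ to show $d(Sx_{m_k},Sx_{n_k})\to\epsilon$. Applying the $S$-proximal inequality to $(x_{m_k+1},x_{n_k+1},x_{m_k},x_{n_k})$ and bracketing the result with further triangle inequalities produces
\[
1\leq \liminf_k \beta\bigl(d(Sx_{m_k},Sx_{n_k})\bigr)\leq 1,
\]
whereupon the $\Gamma$ property forces $d(Sx_{m_k},Sx_{n_k})\to 0$, contradicting $\geq\epsilon$. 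Completeness of $X$ and closedness of $A_0$ then give $Sx_n\to z$ for some $z\in A_0$.

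To extract the best proximity point, I would use that $S$ is subsequentially convergent to produce $x^*\in A_0$ with $x_{n_k}\to x^*$ along a subsequence, and then continuity and injectivity of $S$ to conclude $Sx^*=z$. For the identity $d(Sx^*,STx^*)=d(A,B)$, choose $v\in A_0$ with $d(Sv,STx^*)=d(A,B)$ (available by the same $A_0/B_0$ construction as in the first paragraph), apply the contraction to $(x_{n+1},v,x_n,x^*)$ to obtain $d(Sx_{n+1},Sv)\leq \beta(d(Sx_n,Sx^*))\,d(Sx_n,Sx^*)\to 0$, and combine with $Sx_{n+1}\to Sx^*$ together with injectivity of $S$ to conclude $v=x^*$. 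Uniqueness follows by applying the contraction to any two candidates $x^*,y^*$, which gives $d(Sx^*,Sy^*)\leq\beta(d(Sx^*,Sy^*))\,d(Sx^*,Sy^*)$ and forces $Sx^*=Sy^*$. Finally, the convergence of an arbitrary iterate $\{x_n\}$ to $x^*$ is obtained from the subsequence-of-subsequence principle: every subsequence of $\{x_n\}$ has, by subsequential convergence of $S$, a further subsequence converging to a point that must coincide with $x^*$ by uniqueness.
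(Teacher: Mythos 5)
Your proposal follows essentially the same route as the paper: the same proximal Picard iteration through $A_0$, a Geraghty-style Cauchy argument for $\{Sx_n\}$, extraction of the limit via subsequential convergence and continuity of $S$, identification of the best proximity point by applying the contraction to the quadruple $(x_{n+1},v,x_n,x^*)$, and uniqueness from the same inequality. You actually supply more detail than the paper at its two thinnest points --- the non-Cauchy contradiction (which the paper delegates to ``standard arguments as in Geraghty'') and the convergence of an arbitrary iterate sequence via the subsequence-of-subsequences principle --- while inheriting the paper's one unexamined step, namely that a point of $A_0$ at distance $d(A,B)$ from $STx_0$ can be written as $Sx_1$ with $x_1\in A_0$, which requires $A_0\subseteq S(A_0)$ rather than the stated inclusion $S(A_0)\subseteq A_0$.
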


\begin{proof}
We first introduce the images of the proximity sets under $S$:
\[
S(A_0) := \{\, Sx \mid d(Sx,Sy)=d(A,B) \ \text{for some } y\in B \}, 
\]
\[
S(B_0) := \{\, Sy \mid d(Sx,Sy)=d(A,B) \ \text{for some } x\in A \}.
\]

Let $x_0\in A_0$. Since $T(A_0)\subseteq B_0$, we have $Tx_0\in B_0$ and hence $STx_0\in S(B_0)$. Thus, there exists $x_1\in A_0$ with
\[
d(Sx_1,STx_0)=d(A,B).
\]
Iterating this construction, we obtain a sequence $\{x_n\}\subseteq A_0$ such that
\begin{equation}\label{kaik}
d(Sx_{n+1},STx_n)=d(A,B), \qquad \forall n\in\mathbb{N}.
\end{equation}

Now, by \eqref{kaik} and the contractive condition, we obtain
\[
d(Sx_{n+1},Sx_{m+1})\leq \beta(d(Sx_n,Sx_m))\,d(Sx_n,Sx_m).
\]
Standard arguments as in Geraghty’s fixed point theorem \cite{ggeraghty} imply that $\{Sx_n\}$ is a Cauchy sequence in $X$. Since $X$ is complete and $A_0$ is closed, there exists $v\in A_0$ such that
\begin{equation}\label{a4}
\lim_{n\to\infty} Sx_n=v.
\end{equation}

Because $S$ is subsequentially convergent, $\{x_n\}$ has a subsequence $\{x_{n(k)}\}$ converging to some $u\in A_0$. By continuity of $S$,
\[
\lim_{k\to\infty}Sx_{n(k)}=Su.
\]
Comparing with \eqref{a4}, we conclude $Su=v$.

We claim $u$ is the unique best proximity point of $T$. Since $u\in A_0$ and $T(A_0)\subseteq B_0$, there exists $y^*\in A_0$ with $d(Sy^*,STu)=d(A,B)$. Combining this with \eqref{kaik} for the subsequence $\{x_{n(k)}\}$, we have
\[
d(Sy^*,STu)=d(A,B), \qquad d(Sx_{n(k)+1},STx_{n(k)})=d(A,B).
\]
Applying the contractive condition,
\[
d(Sx_{n(k)+1},Sy^*)\leq \beta(d(Sx_{n(k)},Su))\,d(Sx_{n(k)},Su).
\]
Letting $k\to\infty$, we obtain $d(Su,Sy^*)=0$, hence $Su=Sy^*$. Since $S$ is one-to-one, $u=y^*$. Thus $u$ is a best proximity point of $T$.

Finally, uniqueness: suppose $x_1,x_2\in A$ are two distinct best proximity points, so that $d(Sx_i,STx_i)=d(A,B)$ for $i=1,2$. Then the contractive condition yields
\[
d(Sx_1,Sx_2)\leq \beta(d(Sx_1,Sx_2))\,d(Sx_1,Sx_2),
\]
which is impossible unless $d(Sx_1,Sx_2)=0$. Hence $x_1=x_2$. 
\end{proof}

We emphasize that the assumption of subsequential convergence of $S$ in Theorem~\ref{rrr} cannot be omitted. This is demonstrated by an example (see (see Example~\ref{ex:necessity} below). 

In the next result, we extend the notion of proximal Kannan non-self mappings introduced by Gabeleh \cite{a3} to a Geraghty-type setting.

\begin{thm}[Extended Proximal Kannan--Geraghty of the First Kind]\label{b66}
Let $(A,B)$ be a pair of nonempty subsets of a complete metric space $(X,d)$ such that $A_0$ and $B_0$ are nonempty and closed. Let $S$ be an auxiliary function that is continuous on $A$ and $B$, one-to-one, and subsequentially convergent, with $S(A_0)\subseteq A_0$ and $S(B_0)\subseteq B_0$. 
Suppose $T:A\to B$ is a mapping with $T(A_0)\subseteq B_0$, and assume that  
for all $u,v,x,y\in A$,
\begin{equation}\label{eq:kannan-geraghty}
\begin{aligned}
& d(Su,STx)=d(Sv,STy)=d(A,B) \\[0.3em]
& \quad\Longrightarrow\quad 
   d(Su,Sv)\leq \beta\!\big(d(Sx,Sy)\big)\,
   \big[d^*(Sx,STx)+d^*(Sy,STy)\big],
\end{aligned}
\end{equation}
where $\beta\in\Gamma$ and
\[
d^*(Sx,STx):=d(Sx,STx)-d(A,B)\geq 0.
\]
Then there exists a unique point $x^*\in A$ such that 
\[
d(Sx^*,STx^*)=d(A,B).
\]
Moreover, if $\{x_n\}\subseteq A$ is a sequence satisfying
\[
d(Sx_{n+1},STx_n)=d(A,B), \qquad \forall n\in\mathbb{N},
\]
then $x_n\to x^*$.
\end{thm}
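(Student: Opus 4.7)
The plan is to follow the same skeleton as the proof of Theorem~\ref{rrr}: inductively build a proximal iterate sequence, prove it is Cauchy, extract a candidate via subsequential convergence of $S$, and close by a uniqueness argument. The modifications all live in the Cauchy step, where the Geraghty factor now multiplies a sum of $d^*$-terms rather than a single distance.

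Using $x_0\in A_0$, the hypotheses $T(A_0)\subseteq B_0$ and $S(B_0)\subseteq B_0$, and the definition of $A_0$, I would inductively produce $\{x_n\}\subseteq A_0$ with $d(Sx_{n+1},STx_n)=d(A,B)$ for every $n$. Writing $a_n:=d(Sx_n,Sx_{n+1})$, the key technical input is the triangle bound
\[
d^*(Sx_n,STx_n)=d(Sx_n,STx_n)-d(A,B)\leq d(Sx_n,Sx_{n+1})=a_n,
\]
which follows at once from $d(Sx_{n+1},STx_n)=d(A,B)$. Applying \eqref{eq:kannan-geraghty} to $(u,x,v,y)=(x_{n+1},x_n,x_n,x_{n-1})$ and inserting this bound and its analogue at index $n-1$ yields
\[
a_n \leq \beta(a_{n-1})\,(a_n+a_{n-1})
\quad\Longrightarrow\quad
a_n \leq \tfrac{\beta(a_{n-1})}{1-\beta(a_{n-1})}\,a_{n-1},
\]
which is exactly the Kannan--Geraghty recursion of \cite{Kannangraghty}. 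Monotonicity then supplies a limit $L\geq 0$, and if $L>0$ the Geraghty property $\beta\in\Gamma$ would force $\beta(a_{n-1})\to 1$ and hence $a_{n-1}\to 0$, a contradiction; so $a_n\to 0$. The Cauchy property of $\{Sx_n\}$ now follows by the standard contradiction-and-subsequence argument from Geraghty's original proof, adapted to the $d^*$-form via the same triangle bound.

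By completeness of $X$ and closedness of $A_0$ there exists $v\in A_0$ with $Sx_n\to v$; subsequential convergence of $S$ then supplies $x_{n(k)}\to u\in A_0$, and continuity of $S$ forces $Su=v$. To identify $u$ as a best proximity point, I would use $Tu\in B_0$ together with $S(B_0)\subseteq B_0$ to pick $y^*\in A_0$ with $d(Sy^*,STu)=d(A,B)$, apply \eqref{eq:kannan-geraghty} to $(x_{n(k)+1},x_{n(k)},y^*,u)$, and let $k\to\infty$; both $d^*$-terms on the right vanish in the limit (one because $a_{n(k)}\to 0$, the other via continuity of $S$ and the identity $Su=v$), forcing $Su=Sy^*$ and hence $u=y^*$ by injectivity of $S$. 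Uniqueness is then automatic: for two best proximity points $x_1,x_2$, condition \eqref{eq:kannan-geraghty} with $(u,x,v,y)=(x_1,x_1,x_2,x_2)$ kills the right-hand side, so $d(Sx_1,Sx_2)=0$ and injectivity of $S$ gives $x_1=x_2$.

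The main obstacle I anticipate is the Cauchy step. Unlike Theorem~\ref{rrr}, the recursion here has coefficient $\beta/(1-\beta)$ rather than $\beta$, so one must both justify the triangle-inequality replacement of $d^*(Sx_n,STx_n)$ by $a_n$ (where equality need not hold) and then combine the resulting recursion with the Geraghty limit condition $\beta(t_n)\to 1\Rightarrow t_n\to 0$ to force $a_n\to 0$ and upgrade monotonicity to Cauchyness; this is where the Kannan--Geraghty calibration of $\beta$ is essential.
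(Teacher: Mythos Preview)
Your proposal follows essentially the same skeleton as the paper's own proof: construct the proximal iterates, derive the recursion $a_n\le\frac{\beta}{1-\beta}\,a_{n-1}$ via the triangle bound $d^*(Sx_n,STx_n)\le a_n$, invoke a Geraghty/Kannan--Geraghty argument for Cauchyness, then use subsequential convergence of $S$ to identify the limit and finish with the same best-proximity and uniqueness steps. In fact you are more careful than the paper in two places: you feed $\beta$ the argument $d(Sx_{n-1},Sx_n)$ as the contractive hypothesis actually dictates (the paper writes $\beta(d(Sx_{n-1},STx_{n-1}))$, which does not match \eqref{eq:kannan-geraghty}), and you make the triangle-inequality replacement of $d^*$ by $a_n$ explicit rather than silently absorbing it.
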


\begin{proof}
As before, define
\[
\begin{aligned}
S(A_0) &:= \{\, Sx \;\mid\; d(Sx,Sy)=d(A,B) \text{ for some } y\in B \}, \\[0.5em]
S(B_0) &:= \{\, Sy \;\mid\; d(Sx,Sy)=d(A,B) \text{ for some } x\in A \}.
\end{aligned}
\]

Let $x_0\in A_0$. Since $T(A_0)\subseteq B_0$, we have $Tx_0\in B_0$, hence $STx_0\in S(B_0)$. Thus there exists $x_1\in A_0$ such that
\[
d(Sx_1,STx_0)=d(A,B).
\]
Proceeding inductively, we obtain a sequence $\{x_n\}\subseteq A_0$ with
\begin{equation}\label{lllolll}
d(Sx_{n+1},STx_n)=d(A,B), \qquad \forall n\in\mathbb{N}.
\end{equation}
From condition \eqref{eq:kannan-geraghty} we derive 
\begin{align*}
d(Sx_n,Sx_{n+1})
&\leq \beta(d(Sx_{n-1},STx_{n-1}))\,[d^*(Sx_{n-1},STx_{n-1})+d^*(Sx_n,STx_n)]\\
&\leq \beta(d(Sx_{n-1},STx_{n-1}))\,[d(Sx_{n-1},Sx_n)+d(Sx_n,Sx_{n+1})].
\end{align*}
Rearranging gives
\[
d(Sx_n,Sx_{n+1}) \leq \frac{\beta}{1-\beta}\,d(Sx_{n-1},Sx_n).
\]
Thus $\{Sx_n\}$ is a Cauchy sequence (cf.\ Geraghty \cite{ggeraghty}). Since $X$ is complete and $A_0$ is closed, there exists $v\in A_0$ such that
\begin{equation}\label{a8}
\lim_{n\to\infty}Sx_n=v.
\end{equation}

Because $S$ is subsequentially convergent, $\{x_n\}$ has a subsequence $\{x_{n(k)}\}$ converging to some $u\in A_0$. By continuity of $S$,
\[
\lim_{k\to\infty}Sx_{n(k)}=Su.
\]
Comparing with \eqref{a8}, we conclude $Su=v$.

We claim $u$ is the unique best proximity point of $T$. Since $u\in A_0$ and $T(A_0)\subseteq B_0$, there exists $y^*\in A_0$ with $d(Sy^*,STu)=d(A,B)$. Combining this with \eqref{lllolll}, for the subsequence $\{x_{n(k)}\}$ we have
\[
d(Sy^*,STu)=d(A,B),\qquad d(Sx_{n(k)+1},STx_{n(k)})=d(A,B).
\]
Applying the contractive condition gives
\[
d(Sx_{n(k)+1},Sy^*)\leq \beta(\cdot)\,[d^*(Sx_{n(k)},STx_{n(k)})+d^*(Su,STu)].
\]
Letting $k\to\infty$, we obtain $d(Su,Sy^*)=0$, hence $Su=Sy^*$. Since $S$ is one-to-one, $u=y^*$. Thus $u$ is a best proximity point of $T$.

Uniqueness follows similarly: if $x_1,x_2\in A$ are both best proximity points, then
\[
d(Sx_1,Sx_2)\leq \beta(d(Sx_1,Sx_2))\,[d^*(Sx_1,STx_1)+d^*(Sx_2,STx_2)]=0,
\]
which forces $Sx_1=Sx_2$, hence $x_1=x_2$. 
\end{proof}
\begin{exm}\label{ex:necessity}
Consider the metric space 
\[
X := \{0,1\}\times[0,\infty)
\]
with the Euclidean metric. Define
\[
A := \{(0,x): x\in[0,\infty)\}, \qquad B := \{(1,y): y\in[0,\infty)\}.
\]
Let $T:A\to B$ be given by
\[
T(0,x)=(1,2x+1),
\]
and let $S:X\to X$ be defined by
\[
S(x,y)=(x,e^{-y}).
\]
Clearly $A_0=A$, $B_0=B$, and $S$ is one-to-one.  

For $u_1=(0,x_1),u_2=(0,x_2)\in A$, suppose
\[
d(Su_i,ST(0,x_i))=d(A,B), \qquad i=1,2.
\]
Then 
\[
Su_i=(0,e^{-(2x_i+1)}).
\]
Hence
\begin{align*}
|Su_1-Su_2| &= \left|e^{-(2x_1+1)}-e^{-(2x_2+1)}\right| \\
&= \tfrac{1}{e}\,\big|e^{-2x_1}-e^{-2x_2}\big| \\
&\leq \tfrac{1}{e}\,|S(0,x_1)-S(0,x_2)|.
\end{align*}
Thus an inequality of the Geraghty type can be established with a suitable $\beta\in\Gamma$.  

However, note that the sequence $\{(0,n)\}_{n\ge1}\subseteq A$ satisfies
\[
S(0,n)=(0,e^{-n})\longrightarrow(0,0)\in A,
\]
while $\{(0,n)\}$ has no convergent subsequence in $A$ itself (since the first coordinate is fixed but the second diverges). Hence $S$ fails to be subsequentially convergent in the sense required by Theorem~\ref{rrr}.  

In this case, the mapping $T$ does not admit a best proximity point, showing that the subsequential convergence assumption on $S$ is essential.
\end{exm}

\section{Application to Image Processing}

We now give a concrete registration model in which \emph{all} hypotheses of our main results are satisfied and can be verified directly.

\subsection{A rigorously checkable registration toy model}

Fix a translation offset $\delta>0$ and consider the compact subsets
\[
A:=\{(0,t):\,t\in[0,1]\},\qquad
B:=\{(\delta,s):\,s\in[0,1]\}
\]
of $\mathbb{R}^2$ endowed with the Euclidean metric $d$. Clearly $(A\cup B,d)$ is a complete metric space, and
\[
d(A,B)=\delta,\qquad A_0=A,\qquad B_0=B,
\]
with $A_0,B_0$ nonempty and closed.

Define the auxiliary function $S:A\cup B\to A\cup B$ by
\[
S:=\mathrm{Id}_{A\cup B}.
\]
Then $S$ is continuous, one-to-one, $S(A)=A$, $S(B)=B$, and $S$ is \emph{subsequentially convergent} on $A\cup B$ (trivial on the compact set $A\cup B$, since every sequence admits a convergent subsequence).

For a fixed constant $\kappa\in(0,1)$, define the registration map $T:A\to B$ by
\[
T(0,t) := (\delta,\kappa t).
\]
Note that $T(A_0)\subseteq B_0$.

\subsection{Verification of the $S$-proximal Geraghty condition}

Let $u=(0,u_2),v=(0,v_2),x=(0,x_2),y=(0,y_2)\in A$. If
\[
d(Su,STx)=d(A,B)\quad\text{and}\quad d(Sv,STy)=d(A,B),
\]
then, since $S=\mathrm{Id}$ and $T(0,t)=(\delta,\kappa t)$, the unique points of $A$ at distance $\delta$ from $Tx$ and $Ty$ are
\[
u=(0,\kappa x_2),\qquad v=(0,\kappa y_2).
\]
Hence
\[
d(Su,Sv)=\big\|(0,\kappa x_2)-(0,\kappa y_2)\big\|=\kappa\,|x_2-y_2|
=\kappa\,d(Sx,Sy).
\]
Therefore the implication
\[
d(Su,STx)=d(Sv,STy)=d(A,B)\ \Longrightarrow\ d(Su,Sv)\le \beta\big(d(Sx,Sy)\big)\,d(Sx,Sy)
\]
holds with the \emph{constant} choice $\beta\equiv\kappa$. Since $\kappa\in(0,1)$, this $\beta$ belongs to the Geraghty class $\Gamma$ (the condition $\beta(t_n)\to 1\Rightarrow t_n\to 0$ is vacuously satisfied).

Consequently, $T$ is an $S$-proximal contraction in the sense of Theorem~\ref{rrr}.

\subsection{Existence, uniqueness, and identification of the best proximity point}

All hypotheses of Theorem~\ref{rrr} are now verified: $(A,B)$ inside the complete metric space $(A\cup B,d)$ with $A_0,B_0$ nonempty and closed; $S$ is continuous, injective, subsequentially convergent with $S(A_0)\subseteq A_0$, $S(B_0)\subseteq B_0$; $T(A_0)\subseteq B_0$; and the $S$-proximal Geraghty condition holds with $\beta\in\Gamma$.

Therefore, by Theorem~\ref{rrr}, there exists a unique $x^*\in A$ such that
\[
d(Sx^*,STx^*)=d(A,B)=\delta.
\]
Since $S=\mathrm{Id}$ and $T(0,t)=(\delta,\kappa t)$, we have
\[
d\big((0,t),(\delta,\kappa t)\big)=\sqrt{\delta^2+(1-\kappa)^2 t^2}.
\]
This equals $\delta$ if and only if $t=0$. Hence the \emph{unique} best proximity point is
\[
x^*=(0,0),
\]
and the corresponding closest point in $B$ is $Tx^*=(\delta,0)$.

\begin{rem}
Interpretationally, $A$ and $B$ encode two (registered) feature curves extracted from two images along a vertical scanline, $S$ is the identity (no preprocessing), and $T$ models the combination of a horizontal offset $\delta$ between images and a mild vertical scaling $\kappa\in(0,1)$ of features. The theorem guarantees that the alignment anchor (the unique pair of closest corresponding features) is well-defined and unique. 
\end{rem}

\subsection{Iterative scheme and convergence}

Define $x_0\in A_0$ arbitrarily and construct $\{x_n\}\subset A_0$ by the proximal iteration
\[
d\big(Sx_{n+1},STx_n\big)=d(A,B)=\delta,\qquad n\ge 0.
\]
In the present model this means: choose $x_{n+1}$ to be the unique point in $A$ with the same vertical coordinate as $Tx_n$; i.e.,
\[
x_n=(0,t_n)\ \Rightarrow\ x_{n+1}=(0,\kappa t_n).
\]
Thus $t_{n+1}=\kappa t_n$, whence $t_n=\kappa^n t_0\to 0$ and $x_n\to x^*=(0,0)$, in agreement with the convergence statement of Theorem~\ref{rrr}.

\begin{rem}[Variant for the Kannan--Geraghty setting]
A degenerate yet admissible example for Theorem~\ref{b66} is obtained by $T(0,t)=(\delta,0)$ for all $t\in[0,1]$. Then $u=v=(0,0)$ are the unique points in $A$ at distance $\delta$ from $Tx$ and $Ty$, so the proximal Kannan--Geraghty inequality holds trivially (left-hand side $=0$), and the unique best proximity point is again $(0,0)$. This shows the Kannan-type theorem can also be realized exactly in this framework.
\end{rem}

\end{document}